\newtheorem{thm}{Theorem}[section]
\newtheorem{cor}[thm]{Corollary}
\newtheorem{lem}[thm]{Lemma}
\newtheorem{prop}[thm]{Proposition}
\newtheorem{rem}[thm]{Remark}
\numberwithin{equation}{section}
\begin{document}

\title{{\bf A characterization of round spheres in space forms}}

\author{Francisco Fontenele\thanks{Partially supported by CNPq (Brazil)}\, and Roberto Alonso N\'u\~nez}

\date{}
\maketitle

\footnotetext[1]{2010 {\it Mathematics Subject Classication.}
Primary 53C42, 14J70; Secondary 53C40, 53A10.}

\footnotetext[2]{{\it Key words and phrases.} Hypersurfaces in
space forms, scalar curvature, Laplacian of the $r$-th mean
curvature, hyperbolic polynomials.}

\begin{quote}
\small{\bf Abstract.} Let $\mathbb Q^{n+1}_c$ be the complete
simply-connected $(n+1)$-dimensio-nal space form of
curvature $c$. In this paper we obtain a new
characterization of geodesic spheres in $\mathbb Q^{n+1}_c$ in terms of the higher order mean curvatures. In
particular, we prove that the geodesic sphere is the only complete
bounded immersed hypersurface in $\mathbb Q^{n+1}_c,\;c\leq 0,$
with constant mean curvature and constant scalar curvature. The
proof relies on the well known Omori-Yau maximum principle, a
formula of Walter for the Laplacian of the $r$-th mean curvature
of a hypersurface in a space form, and a classical inequality of
G\aa rding for hyperbolic polynomials.
\end{quote}
%\maketitle

\section{Introduction }

A question of interest in differential geometry is whether the
geodesic sphere is the only compact oriented hypersurface in the
$(n+1)$-dimensional Euclidean space $\mathbb R^{n+1}$ with
constant $r$-th mean curvature $H_r$, for some $r=1,...,n$ ($H_1$,
$H_2$, and $H_n$ are the mean curvature, the scalar curvature, and
the Gauss-Kronecker curvature, respectively -- see the definitions
in Section 2). When $r=1$ this question is the well known Hopf
conjecture, and when $r=2$ it is a problem proposed by Yau
\cite[Problema 31, p. 677]{Y2}.

As proved by Alexandrov \cite{Al} for $r=1$, and by Ros
\cite{Ro1,Ro2} (see also \cite{Kor,MR}) for any $r$, the above
question has an affirmative answer for embedded hypersurfaces. In
the immersed case, the question has a negative answer when $r=1$
(by the examples of non-spherical compact hypersurfaces with
constant mean curvature in the Euclidean space constructed by
Wente \cite{We} and by Hsiang, Teng and Yu \cite{HTY}), and an
affirmative answer when $r=n$ (by a theorem of Hadamard). The
problem is still unsolved for $1<r<n$. For partial answers when
$r=2$ (Yau's problem), see \cite{Ch,Li,Ok}.

Because of the difficulty of the above question, it is natural to attempt to obtain the rigidity of the sphere in $\mathbb R^{n+1}$ under geometric conditions stronger than $H_r$ be constant for some $r$. In this regard, Gardner \cite{Gar} proved that if a compact oriented hypersurface $M^n$ in $\mathbb R^{n+1}$ has
two consecutive mean curvatures $H_r$ and $H_{r+1}$ constant, for
some $r=1,...,n-1$, then it is a geodesic sphere. For generalizations of this result see \cite{Bi,Ko,Wan}.

In \cite{CW}, Cheng and Wan proved that a complete hypersurface
$M^3$ with constant scalar curvature $R$ and constant mean
curvature $H\neq 0$ in $\mathbb R^4$ is a generalized cylinder
$\mathbb S^k(a)\times\mathbb R^{3-k}$, for some $k=1,2,3$ and some
$a>0$ (see \cite{Nu} for results of this nature in higher
dimensions). From this result one obtains the following
improvement, when $n=3$ and $r=1$, in the theorem of Gardner
referred to above: {\it The geodesic spheres are the only complete
bounded immersed hypersurfaces in $\mathbb R^4$ with constant scalar curvature and constant mean curvature} (compare with Corollary
\ref{Cor1}).

Our main result (Theorem \ref{MainResult}) provides a new
characterization of geodesic spheres in space forms. There are
many results of this nature in the literature, most of which
assuring that a compact hypersurface that satisfies certain
geometric conditions is a geodesic sphere. What makes special the
characterization provided by Theorem \ref{MainResult} is that in
it the geometric conditions are imposed on a {\it complete}
hypersurface (that is bounded when $c\leq 0$, and contained in a
spherical cap when $c>0$), and not on a {\it compact} one.

In the theorem below, as well as in the remaining of this work,
$\mathbb Q^{n+1}_c$ stands for the $(n+1)$-dimensional complete
simply-connected space of constant sectional curvature $c$.

\begin{thm}\label{MainResult}
Let $M^n$ be a complete Riemannian manifold with scalar curvature
$R$ bounded from below, and let $f:M^n\to\mathbb Q^{n+1}_c$ be an
isometric immersion. In the case $c\leq 0$, assume that $f(M^n)$
is bounded, and in the case $c>0$, that $f(M^n)$ lies inside a
geodesic ball of radius $\rho<\pi/2\sqrt{c}$. If the mean
curvature $H$ is constant and, for some $r=2,...,n$, the $r$-th
mean curvature $H_r$ is constant, then $f(M^n)$ is a geodesic
sphere of $\mathbb Q^{n+1}_c$.
\end{thm}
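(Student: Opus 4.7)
I would combine the three ingredients advertised in the abstract as follows.

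First, I extract boundedness of the second fundamental form from the Gauss equation
\[
|A|^2 \;=\; n^2 H^2 + n(n-1)c - R,
\]
which, together with the constancy of $H$ and the lower bound on $R$, gives that $|A|$ is bounded on $M$. Consequently each elementary symmetric function $H_k$ of the principal curvatures is bounded, and the sectional and Ricci curvatures of $M$ are bounded. The Omori--Yau maximum principle, and even Omori's Hessian-enhanced version, is therefore available throughout the argument.

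Second, I would show that for a suitable orientation of $f$, the image $f(M)$ lies in the G\aa rding cone $\Gamma_r$ of principal-curvature vectors with $H_1>0,\dots,H_r>0$. For this I apply Omori's theorem to a bounded function that measures the position of $f$: the squared norm $|f|^2$ when $c=0$, a $\cosh$-type function of the distance to a fixed center when $c<0$, and a $1-\cos$-type function of the distance to the center of the cap when $c>0$. The Hessian of each such function has the form $a(g+b\,\alpha)$, where $\alpha$ is the scalar second fundamental form of $f$; at a sequence $p_k$ approaching the supremum, the Hessian bound then yields $1+b(p_k)\lambda_i(p_k)\le\varepsilon_k$ for every principal curvature. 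After fixing the orientation, all $\lambda_i(p_k)$ are positive (and bounded away from zero) for large $k$, so $H_r>0$ at $p_k$. The constancy of $H_r$ and the connectedness of $M$ then place $M$ in the connected component $\Gamma_r$ of $\{H_r>0\}$.

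Third, I combine Walter's formula for $\Delta H_{r-1}$ with G\aa rding's inequality. In $\Gamma_r$ the G\aa rding/Newton--Maclaurin inequalities give the pointwise bound $H_{r-1}\ge H_r^{(r-1)/r}$, with equality exactly at umbilic points. I apply Omori--Yau to $-H_{r-1}$, which is bounded above, to obtain a sequence $q_k$ with $H_{r-1}(q_k)\to\inf H_{r-1}$ and $\Delta H_{r-1}(q_k)\ge -1/k$. Plugging this into Walter's formula and using the constancy of $H$ and $H_r$ should collapse the right-hand side into a G\aa rding-type expression whose sign is governed by how far $(\lambda_1,\dots,\lambda_n)$ is from being constant. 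Passing to the limit forces this expression to vanish along $q_k$, so $\inf H_{r-1}=H_r^{(r-1)/r}$; combined with the pointwise lower bound this gives $H_{r-1}\equiv H_r^{(r-1)/r}$, the equality case of G\aa rding, so $M$ is totally umbilic. Because $H$ is constant, the complete umbilic hypersurface $f(M)$ is a geodesic sphere in $\mathbb Q^{n+1}_c$.

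The main obstacle I anticipate is the third step: extracting from Walter's formula the precise combination of terms whose sign is controlled by G\aa rding's inequality. The constancy of $H$ and $H_r$ is what makes the lower-order terms collapse, but ensuring that the resulting quantity is nonnegative and strictly positive off the umbilic locus---while correctly accounting for the ambient-curvature corrections in the $c\ne 0$ cases---is the technical heart of the proof.
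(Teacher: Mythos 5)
Your first two steps coincide with the paper's: the relation $n^2H^2=|A|^2+n(n-1)(R-c)$ bounds $|A|$, and an Omori--Yau argument applied to a distance-type function produces a point $p$ and a choice of normal at which every principal curvature exceeds $\alpha_c$ ($=0$ if $c\ge 0$, $=\sqrt{-c}$ if $c<0$); constancy and positivity of $H_r$ at $p$ then place $\overrightarrow{\lambda}(q)$ in the G\aa rding cone $\Gamma_r$ for all $q$ by connectedness. Your third step, however, diverges from the paper and contains genuine gaps. First, applying Walter's formula to $H_{r-1}$ loses the one piece of information that makes the sign analysis work: in the paper the concavity of $\sigma_r^{1/r}$ on $\Gamma_r$ yields
\begin{eqnarray*}
\binom{n}{r}H_r\sum_{i,j}h_{iik}h_{jjk}\frac{\partial^2\sigma_r}{\partial x_i\partial x_j}(\overrightarrow{\lambda})
\leq\frac{r-1}{r}\Big(\sum_jh_{jjk}\frac{\partial\sigma_r}{\partial x_j}(\overrightarrow{\lambda})\Big)^2
=\frac{r-1}{r}\binom{n}{r}^2\big(e_k(H_r)\big)^2,
\end{eqnarray*}
and the right-hand side vanishes precisely because it is $H_r$ (not $H_{r-1}$) that is constant; for $H_{r-1}$ the analogous gradient term is uncontrolled, so $\Delta H_{r-1}$ acquires no sign. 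Second, the curvature term $\sum_{i<j}\frac{\partial^2\sigma_r}{\partial x_i\partial x_j}(\lambda_i-\lambda_j)^2K_{ij}$ is only known to be nonnegative where all $\lambda_i>\alpha_c$ (there $K_{ij}=c+\lambda_i\lambda_j>0$ and $\frac{\partial^2\sigma_r}{\partial x_i\partial x_j}=\sigma_{r-2}(\widehat{\lambda_i},\widehat{\lambda_j})>0$); along an Omori--Yau minimizing sequence for $H_{r-1}$ you have no such control. Third, even granting the analysis, your final inference is a non sequitur: $\inf H_{r-1}=H_r^{(r-1)/r}$ together with the pointwise bound $H_{r-1}\ge H_r^{(r-1)/r}$ does not give $H_{r-1}\equiv H_r^{(r-1)/r}$; a function can attain its infimum at the value of its lower bound without being constant.

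The paper needs none of this second limiting argument. Since $H$ and $H_r$ are both constant, Walter's formula applied to $H_r$ itself reduces to the pointwise identity
\begin{eqnarray*}
\sum_{i<j}\frac{\partial^2\sigma_r}{\partial x_i\partial x_j}(\overrightarrow{\lambda})(\lambda_i-\lambda_j)^2K_{ij}
=\sum_{i,j,k}\frac{\partial^2\sigma_r}{\partial x_i\partial x_j}(\overrightarrow{\lambda})(h_{iik}h_{jjk}-h_{ijk}^2),
\end{eqnarray*}
and the displayed G\aa rding inequality (with $e_k(H_r)=0$) makes the right-hand side $\le 0$. At the elliptic point $p$ every summand on the left is nonnegative with $K_{ij}(p)>0$ and $\sigma_{r-2}(\widehat{\lambda_i},\widehat{\lambda_j})(p)>0$, which forces $\lambda_1(p)=\cdots=\lambda_n(p)=H>\alpha_c$. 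The same argument applies at any point where all principal curvatures exceed $\alpha_c$, so the umbilic set is open; it is closed by continuity and nonempty, hence all of $M$, and the classification of umbilical hypersurfaces finishes the proof. If you want to salvage your scheme, the fix is to abandon the $H_{r-1}$ detour and run the sign argument on $\Delta H_r=0$ at the single point furnished by your step two, then propagate umbilicity by the open--closed argument.
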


The following results follow immediately from the above theorem.
Notice that the hypothesis in Theorem \ref{MainResult} that the
scalar curvature of $M^n$ is bounded from below is superfluous
when $r=2$.

\begin{cor}\label{Cor1}
Let $f:M^n\to\mathbb Q^{n+1}_c$ be an isometric immersion of a
complete Riemannian manifold $M^n$ in $\mathbb Q^{n+1}_c$. In the
case $c\leq 0$, assume that $f(M^n)$ is bounded, and in the case
$c>0$, that $f(M^n)$ lies inside a geodesic ball of radius
$\rho<\pi/2\sqrt{c}$. If the mean curvature $H$ and the scalar
curvature $R$ are constant, then $f(M^n)$ is a geodesic sphere of
$\mathbb Q^{n+1}_c$.
\end{cor}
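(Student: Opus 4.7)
The plan is to reduce Corollary \ref{Cor1} to Theorem \ref{MainResult} applied with $r=2$, by using the Gauss equation to convert the constancy of the scalar curvature into the constancy of $H_2$. For an isometric immersion $f:M^n\to\mathbb Q^{n+1}_c$, the Gauss equation, combined with the standard computation
\[
n(n-1)H_2 \;=\; 2\sum_{i<j}k_i k_j \;=\; \Bigl(\sum_i k_i\Bigr)^{\!2}-\sum_i k_i^2 \;=\; n^{2}H^{2}-|A|^{2},
\]
yields the well-known identity
\[
R \;=\; n(n-1)\bigl(c+H_2\bigr),
\]
where $k_1,\ldots,k_n$ are the principal curvatures and $|A|^2$ is the squared norm of the shape operator. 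In particular, $R$ is constant if and only if $H_2$ is constant.

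With this identity in hand, the corollary is a direct application of Theorem \ref{MainResult}. Indeed, assuming $H$ and $R$ constant, the identity above provides that $H_2$ is also constant, so $H$ and $H_r$ are both constant for $r=2$. The remaining geometric hypotheses on $f$ (completeness of $M^n$, boundedness of $f(M^n)$ when $c\le 0$, and containment in a geodesic ball of radius $<\pi/2\sqrt{c}$ when $c>0$) are literally the same in the corollary as in the theorem, and the lower bound on $R$ required in Theorem \ref{MainResult} is trivially satisfied since $R$ is constant. Invoking Theorem \ref{MainResult} with $r=2$ then concludes that $f(M^n)$ is a geodesic sphere of $\mathbb Q^{n+1}_c$.

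This argument also justifies the remark preceding the statement: when $r=2$, the assumption in Theorem \ref{MainResult} that the scalar curvature be bounded from below is redundant, since constancy of $H_2$ already forces $R$ to be constant, hence bounded, via the same Gauss relation. No real obstacle arises in this deduction; the only point that deserves attention is that one uses the normalization of $H_2$ fixed in Section~2 consistently, so that the factor $n(n-1)$ in the Gauss identity is the correct one.
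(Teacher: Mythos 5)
Your proof is correct and matches the paper's (implicit) argument: the corollary is deduced from Theorem \ref{MainResult} with $r=2$ by noting that the Gauss equation makes constancy of $R$ equivalent to constancy of $H_2$, and that the lower bound on $R$ is then automatic. The only cosmetic difference is the normalization — the paper uses the normalized scalar curvature, for which $R=c+H_2$ without the factor $n(n-1)$ — but this does not affect the equivalence you need.
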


\begin{cor}\label{Cor2}
Let $f:M^n\to\mathbb Q^{n+1}_c$ be an isometric immersion of a
compact Riemannian manifold $M^n$ in $\mathbb Q^{n+1}_c$. In the
case $c>0$, assume that $f(M)$ is contained in an open hemisphere
of  $\mathbb S^{n+1}_c$. If the mean curvature $H$ is constant
and, for some $r=2,...,n$, the $r$-th mean curvature $H_r$ is
constant, then $f(M^n)$ is a geodesic sphere of $\mathbb
Q^{n+1}_c$.
\end{cor}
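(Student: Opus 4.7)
The plan is to derive Corollary~\ref{Cor2} directly from Theorem~\ref{MainResult} by verifying that the hypotheses of the theorem are fulfilled under the (apparently weaker) assumptions of the corollary. Since $M^n$ is compact, it is automatically complete, and its scalar curvature, being a continuous function on a compact manifold, is bounded below. So the regularity hypotheses on $M^n$ required by the theorem are immediate.

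Next I would check the containment condition on the image. In the case $c\leq 0$, the image $f(M^n)$ is a continuous image of a compact set, hence compact and therefore bounded, which is exactly what Theorem~\ref{MainResult} asks for. In the case $c>0$, the assumption is that $f(M^n)$ lies in an open hemisphere of $\mathbb S^{n+1}_c$. An open hemisphere coincides with an open geodesic ball of radius exactly $\pi/(2\sqrt{c})$ centered at a pole; since $f(M^n)$ is a compact subset of this open ball, the distance from $f(M^n)$ to its boundary is strictly positive, and hence $f(M^n)$ is contained in a closed geodesic ball of some radius $\rho<\pi/(2\sqrt{c})$, as required by the theorem.

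Since the constancy of $H$ and of $H_r$ is inherited verbatim from the hypotheses of the corollary, all assumptions of Theorem~\ref{MainResult} are now met, and its conclusion gives exactly that $f(M^n)$ is a geodesic sphere of $\mathbb Q^{n+1}_c$. I do not anticipate any genuine obstacle here: the only nontrivial step is the compactness argument that upgrades \emph{contained in an open hemisphere} to \emph{contained in a closed geodesic ball of radius strictly less than $\pi/(2\sqrt{c})$}, and this is essentially a one-line consequence of the compactness of $f(M^n)$.
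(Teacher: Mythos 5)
Your proposal is correct and is exactly the intended derivation: the paper simply asserts that Corollary \ref{Cor2} follows immediately from Theorem \ref{MainResult}, and your verification (compactness gives completeness, boundedness of the scalar curvature and of $f(M^n)$, and the upgrade from an open hemisphere to a closed geodesic ball of radius $\rho<\pi/2\sqrt{c}$) supplies precisely the omitted routine details.
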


\begin{rem}{\em The examples of Wente \cite{We} and Hsiang, Teng and Yu
\cite{HTY}, referred to in the second paragraph of this section,
show that the hypothesis that $H_r$ is constant for some
$r,\;2\leq r\leq n$, can not be removed from Theorem
\ref{MainResult}. It is surely a difficult question to know
whether the theorem holds without the assumption that $H$ is
constant (cf. Yau's problem mentioned in the beginning of this
section). We do not know whether Theorem \ref{MainResult} (for
$r\geq 3$) holds without the hypothesis that the scalar curvature
of $M$ is bounded below.}
\end{rem}

The proof of Theorem \ref{MainResult} relies on the well known
Omori-Yau maximum principle \cite{CY1,Om,Y1}, a formula of Walter
\cite{Wa} for the Laplacian of the $r$-th mean curvature of a
hypersurface in a space form, and a classical inequality of G\aa
rding \cite{Gaa} for hyperbolic polynomials.

\section{Preliminaries}

Given an isometric immersion $f:M^n\to N^{n+k}$ of a
$n$-dimensional Riemannian manifold $M^n$ into a
$(n+k)$-dimensional Riemannian manifold $N^{n+k}$, denote by
$\sigma:TM\times TM\to TM^{\perp}$ the (vector valued) second
fundamental form of $f$, and by $A_{\xi}$ the shape operator of
the immersion with respect to a (locally defined) unit normal
vector field $\xi$. From the Gauss formula one obtains, for all
smooth vector fields $X$ and $Y$,
\begin{eqnarray}\label{Sigma}
\langle A_{\xi}X,Y\rangle=\langle\sigma(X,Y),\xi\rangle.
\end{eqnarray}

In the particular case that $M$ and $N$ are orientable and $k=1$,
one may choose a global unit normal vector field $\xi$ and so
define a (symmetric) $2$-tensor field $h$ on $M$ by
$h(X,Y)=\langle\sigma(X,Y),\xi\rangle$. Then, by (\ref{Sigma}),
\begin{eqnarray}\label{SecondFundForm} h(X,Y)=\langle
AX,Y\rangle,\;\;\;X,Y\in\mathfrak X(M),
\end{eqnarray}
where $A=A_{\xi}$ is the shape operator of the immersion with
respect to $\xi$. If we assume further that $N^{n+1}$ has constant
sectional curvature, it follows from the symmetry of $h$ and the
Codazzi equation that the covariant derivative $\nabla h$ of $h$
is symmetric. Hence, $\nabla^2h:=\nabla(\nabla h)$ is symmetric in
the first three entries. The following lemma shows what happens
when we interchange vectors in its third and forth entries. In its
statement, as well as in the remaining of the work, we denote by
$h_{ij}$, $h_{ijk}$ and $h_{ijkl}$ the components of $h$, $\nabla
h$ and $\nabla^2h$, respectively, in a local orthonormal frame
field $\{e_1,\ldots,e_n\}$, i.e.,
$$
h_{ij}=h(e_i.e_j),\;\;h_{ijk}=\nabla
h(e_i,e_j,e_k),\;\;h_{ijkl}=\nabla^2h(e_i,e_j,e_k,e_l).
$$
\begin{lem}\label{Inversion}
For any local orthonormal frame field $\{e_1,\ldots,e_n\}$ on
$M^n$, we have
\begin{eqnarray}\label{Inversion1}
h_{ijkl}-h_{ijlk}=\sum_m\mathcal R_{klim}h_{mj}+\sum_m\mathcal
R_{kljm}h_{im},
\end{eqnarray}
for all $i,j,k,l\in\{1,...,n\}$, where $\mathcal R$ is the
Riemannian curvature tensor of $M^n$ and, for example, $\mathcal
R_{klim}=\langle\mathcal R(e_k,e_l)e_i,e_m\rangle$.
\end{lem}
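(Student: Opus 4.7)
The statement is an instance of the classical Ricci commutation formula for second covariant derivatives, applied to the $(0,2)$-tensor $h$; proving it should reduce entirely to bookkeeping with the paper's index conventions.

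My plan is to fix a point $p\in M^n$ and choose the orthonormal frame $\{e_1,\ldots,e_n\}$ to be geodesic at $p$, so that $\nabla_{e_i}e_j$ and $[e_i,e_j]$ vanish at $p$. With the standard convention $(\nabla T)(\cdots,Y)=(\nabla_Y T)(\cdots)$ that places the new derivative argument at the end, a direct unwinding yields, at the point $p$,
\[
h_{ijkl}=(\nabla_{e_l}\nabla_{e_k}h)(e_i,e_j),\qquad h_{ijlk}=(\nabla_{e_k}\nabla_{e_l}h)(e_i,e_j).
\]
Subtracting, the left-hand side of (\ref{Inversion1}) equals $\bigl([\nabla_{e_l},\nabla_{e_k}]h\bigr)(e_i,e_j)$.

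I then invoke the Ricci identity: for any $(0,2)$-tensor $T$ and vector fields $X,Y,Z,W$,
\[
\bigl([\nabla_X,\nabla_Y]T-\nabla_{[X,Y]}T\bigr)(Z,W)=-T(\mathcal R(X,Y)Z,W)-T(Z,\mathcal R(X,Y)W),
\]
reflecting the fact that the curvature acts as a derivation on tensors. Applied with $T=h$, $X=e_l$, $Y=e_k$, $Z=e_i$, $W=e_j$, and expanding $\mathcal R(e_l,e_k)e_i=\sum_m\mathcal R_{lkim}e_m$ (and similarly for $e_j$), the right-hand side becomes $-\sum_m\mathcal R_{lkim}h_{mj}-\sum_m\mathcal R_{lkjm}h_{im}$. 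Finally, the antisymmetry $\mathcal R_{lkim}=-\mathcal R_{klim}$ in the first pair of indices flips both signs and produces (\ref{Inversion1}) exactly.

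The argument presents no real obstacle; the only step warranting attention is matching the paper's index/sign conventions. With the outside-in reading of $\nabla^2h$ fixed above, the commutator that naturally appears is $[\nabla_{e_l},\nabla_{e_k}]$ rather than $[\nabla_{e_k},\nabla_{e_l}]$, which is why the curvature tensor shows up with its first two indices in the order $lk$; the resulting sign flip is absorbed at the last step.
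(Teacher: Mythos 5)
Your proof is correct, but note that the paper does not actually prove Lemma \ref{Inversion}: it declares the formula well known and refers the reader to \cite[p.~1167]{Ch1}. So you are supplying an argument where the authors supply none, and the one you give is the standard one: pass to a frame that is geodesic at the point, identify $h_{ijkl}-h_{ijlk}$ with the commutator of second covariant derivatives, and apply the Ricci identity for $(0,2)$-tensors, in which the curvature operator acts as a derivation with a minus sign on each covariant slot. Your bookkeeping checks out against the paper's conventions: with $\mathcal R(X,Y)=[\nabla_X,\nabla_Y]-\nabla_{[X,Y]}$ one gets $-\sum_m\mathcal R_{lkim}h_{mj}-\sum_m\mathcal R_{lkjm}h_{im}$, and the antisymmetry in the first index pair turns this into the right-hand side of (\ref{Inversion1}); this sign convention is the one the paper implicitly uses, since in the proof of Proposition \ref{WalterProposition} the lemma is specialized to give $h_{jjkk}=h_{kkjj}+(\lambda_j-\lambda_k)\mathcal R_{jkkj}$ with $\mathcal R_{jkkj}$ subsequently identified with the (positive, for a sphere) sectional curvature $K_{jk}$. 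The only thing I would add for completeness is a one-line justification (or at least a statement) of the Ricci identity for covariant $2$-tensors itself, e.g.\ by reducing to the vector-field case via the metric, since that identity is carrying all the content here.
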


Formula (\ref{Inversion1}) above is well known. For a proof see,
for instance, \cite[p. 1167]{Ch1}.

Given an isometric immersion $f:M^n\to N^{n+1}$, denote by
$\lambda_1,...,\lambda_n$ the principal curvatures of $M^n$ with
respect to a global unit normal vector field $\xi$ (i.e., the
eigenvalues of the shape operator $A=A_{\xi}$). It is well known
that if we label the principal curvatures at each point by the
condition $\lambda_1\leq\cdots\leq\lambda_n$, then the resulting
functions $\lambda_i:M\to\mathbb R,\;i=1,...,n,$ are continuous.

The $r$-th mean curvature $H_r$, $1\leq r\leq n$, of $M^n$
is defined by
\begin{eqnarray}\label{DefinitionHr}
{n \choose
r}H_r=\sum_{i_1<\ldots<i_r}\lambda_{i_1}\ldots\lambda_{i_r}.
\end{eqnarray}
Notice that $H_1$ is the mean curvature $H$ ($=\frac{1}{n}
\text{tr}A$, where $\text{tr}A$ is the trace of $A$) and
$H_n=\lambda_1\lambda_2\ldots\lambda_n$ is the Gauss-Kronecker
curvature of the immersion. In the particular case that $N^{n+1}$
has constant sectional curvature, the function $H_2$ is up to a
constant the (normalized) scalar curvature $R$ of $M^n$. In fact,
if $N^{n+1}$ has constant sectional curvature $c$ and if
$\{e_1,...,e_n\}$ is an orthonormal basis for the tangent space at
a given point of $M^n$ such that $Ae_i=\lambda_ie_i,\;i=1,...,n$,
then the sectional curvature $K(e_i,e_j)$ of the plane spanned by
$e_i$ and $e_j$ is, by the Gauss equation, given by
$$
K(e_i,e_j)=c+\lambda_i\lambda_j,
$$
and so
\begin{eqnarray}\label{SignificadoH2}
R=\frac{1}{{n \choose 2}}\sum_{i<j}K(e_i,e_j)=\frac{1}{{n \choose 2}}\sum_{i<j}(c+\lambda_i\lambda_j)=c+H_2.
\end{eqnarray}

The squared norm $|A|^2$ of the shape
operator $A$ is defined as the trace of $A^2$. It is easy to see that
\begin{eqnarray}\label{Norm2}
|A|^2=\sum_{i}\lambda_i^2.
\end{eqnarray}

From (\ref{DefinitionHr}), (\ref{SignificadoH2}) and (\ref{Norm2})
we obtain the following useful relation involving the mean
curvature $H$, the norm $|A|$ of the shape operator $A$ and the
normalized scalar curvature $R$:
\begin{eqnarray}\label{Relation}
n^2H^2=\left(\sum_{i=1}^n\lambda_i\right)^2=\sum_{i=1}^n\lambda_i^2+\sum_{i\neq
j}\lambda_i\lambda_j=|A|^2+n(n-1)(R-c).
\end{eqnarray}

In terms of the $r$-th symmetric function $\sigma_r:\mathbb
R^n\to\mathbb R$,
\begin{eqnarray}\label{SymmetricFunctionDefinition}
\sigma_r(x_1,\ldots,x_n)=\sum_{i_1<\ldots<i_r}x_{i_1}\ldots
x_{i_r},
\end{eqnarray}
equality (\ref{DefinitionHr}) can be rewritten as
\begin{eqnarray}\label{DefinitionHr2}
{n \choose r}H_r=\sigma_r\circ\overrightarrow\lambda,
\end{eqnarray}
where $\overrightarrow\lambda=(\lambda_1,...,\lambda_n)$ is the
principal curvature vector of the immersion. In order to unify the
notation, we define $H_0=1=\sigma_0$ and $H_r=0=\sigma_r$, for all
$r\geq n+1$.

As one might expect, the knowledge of the properties of the
symmetric functions is very important to the study of the higher
order mean curvatures of a hypersurface. In order to state a
property of the symmetric functions that will be relevant to us,
we will summarize below some of the  results of the classical
article by G\aa rding \cite{Gaa} on hyperbolic polynomials (see
also \cite[p. 268]{CNS} and \cite[p. 217]{FS}).

Let $P:\mathbb R^n\to\mathbb R$ be a homogenous polynomial of
degree $m$ and let $a=(a_1,\ldots,a_n)$ be a fixed vector of
$\mathbb R^n$. We say that $P$ is hyperbolic with respect to the
vector $a$, or in short, that $P$ is $a$-hyperbolic, if for every
$x\in\mathbb R^{n}$ the polynomial in $s$, $P(sa+x)$, has $m$ real
roots. Denote by $\Gamma_P$ the connected component of the set
$\{P\neq 0\}$ that contains $a$. In \cite{Gaa}, G\aa rding
proved that $\Gamma_P$ is an open convex cone, with vertex at the
origin, and that the homogenous polynomial of degree $m-1$ defined
by
\begin{eqnarray}\label{hr12}
Q(x)=\frac{d}{ds}\biggl|_{s=0}P(sa+x)=\sum_{j=1}^na_j\frac{\partial P}{\partial x_j}(x)
\end{eqnarray}
is also $a$-hyperbolic. Moreover, $\Gamma_P\subset\Gamma_Q$.

As can be easily seen, the $n$-th symmetric function $\sigma_n$ is
hyperbolic with respect to the vector $a=(1,\ldots,1)$. Applying
the results of the previous paragraph to $\sigma_n$, and observing
that
\begin{eqnarray}\label{hr13}
\sigma_r(x)=\frac{1}{(n-r)!}\frac{d^{n-r}}{ds^{n-r}}\biggl|_{s=0}\sigma_n(sa+x),\;\;\;r=1,...,n-1,
\end{eqnarray}
one concludes that $\sigma_r,\;1\leq r\leq n,$ is hyperbolic with
respect to $a=(1,...,1)$ and that
\begin{eqnarray}\label{hr14}
\Gamma_1\supset\Gamma_2\supset\ldots\supset\Gamma_n,
\end{eqnarray}
where $\Gamma_r:=\Gamma_{\sigma_r}$.

In \cite{Gaa}, G\aa rding established an inequality for hyperbolic
polynomials involving their completely polarized forms. A
particular case of this inequality, from which the general case is
derived, says that
\begin{eqnarray}\label{DesigualdadeGarding}
\frac{1}{m}\sum_{k=1}^n y_k\frac{\partial P}{\partial x_k}(x) \geq
P(y)^{\frac{1}{m}}P(x)^{1-\frac{1}{m}},\;\;\;\forall
x,y\in\Gamma_P.
\end{eqnarray}
As observed in \cite[p. 269]{CNS}, the above inequality is
equivalent to the assertion that $P^{1/m}$ is a concave function
on $\Gamma_P$. In particular, we have the following result, which
will play an important role in the proof of Theorem
\ref{MainResult}.

\begin{prop}\label{Concave}
For each $r=1,2,...,n$, the function $\sigma_r^{1/r}$ is concave
on $\Gamma_r$.
\end{prop}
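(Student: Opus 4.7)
The plan is to deduce concavity of $\sigma_r^{1/r}$ on $\Gamma_r$ directly from G\aa rding's inequality (\ref{DesigualdadeGarding}) applied to the polynomial $P=\sigma_r$ with degree $m=r$. First I would note that $\sigma_r(1,\ldots,1)=\binom{n}{r}>0$, and since $\Gamma_r$ is the connected component of $\{\sigma_r\neq 0\}$ containing $(1,\ldots,1)$, continuity forces $\sigma_r>0$ throughout $\Gamma_r$. Hence $F:=\sigma_r^{1/r}$ is a smooth, positive, degree-$1$ homogeneous function on the open convex cone $\Gamma_r$.

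Next I would exploit the standard fact that a smooth degree-$1$ positively homogeneous function on a convex cone is concave if and only if its graph lies below every tangent hyperplane, which for homogeneous functions reduces to
\[
DF(x)(y)\;\geq\;F(y)\quad\text{for all }x,y\in\Gamma_r.
\]
Computing, $F_k(x)=\frac{1}{r}\sigma_r(x)^{\frac{1}{r}-1}\partial_k\sigma_r(x)$, so
\[
DF(x)(y)=\frac{1}{r}\sigma_r(x)^{\frac{1}{r}-1}\sum_{k=1}^n y_k\,\frac{\partial \sigma_r}{\partial x_k}(x).
\]
By (\ref{DesigualdadeGarding}) applied to $P=\sigma_r$ with $m=r$, one has $\frac{1}{r}\sum_k y_k\partial_k\sigma_r(x)\geq \sigma_r(y)^{1/r}\sigma_r(x)^{1-1/r}$, and substituting gives precisely $DF(x)(y)\geq \sigma_r(y)^{1/r}=F(y)$.

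Finally I would close the argument with the usual support-plane trick: for $x,y\in\Gamma_r$ and $t\in[0,1]$, convexity of $\Gamma_r$ puts $z:=(1-t)x+ty$ in $\Gamma_r$, and the inequality just established yields $DF(z)(x)\geq F(x)$ and $DF(z)(y)\geq F(y)$. Taking the convex combination and using Euler's relation $DF(z)(z)=F(z)$ (since $F$ is degree $1$ homogeneous), we obtain
\[
F(z)=DF(z)\bigl((1-t)x+ty\bigr)\;\geq\;(1-t)F(x)+tF(y),
\]
which is concavity of $F=\sigma_r^{1/r}$ on $\Gamma_r$. No step here is truly an obstacle, since everything needed (positivity of $\sigma_r$ on $\Gamma_r$, convexity of $\Gamma_r$, and the key gradient inequality) has been handed to us by the preceding discussion; the only point requiring a little care is the equivalence between the pointwise inequality $DF(x)(y)\geq F(y)$ and concavity, which is where Euler's homogeneity identity is used.
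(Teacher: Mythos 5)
Your proof is correct and follows the same route the paper takes: deducing the concavity of $\sigma_r^{1/r}$ on $\Gamma_r$ from G\aa rding's inequality (\ref{DesigualdadeGarding}). The paper simply cites \cite{CNS} for the equivalence between that inequality and the concavity of $P^{1/m}$ on $\Gamma_P$, whereas you supply the missing details (positivity of $\sigma_r$ on $\Gamma_r$, the gradient inequality $DF(x)(y)\geq F(y)$, and Euler's relation for the degree-one homogeneous function $F=\sigma_r^{1/r}$), all of which check out.
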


\section{The Laplacian of the $r$-th mean curvature}

The symmetric functions $\sigma_r,\;1\leq r\leq n$, defined by
(\ref{SymmetricFunctionDefinition}), arise naturally from the
identity
\begin{eqnarray}\label{SymmetricFunction}
\prod_{s=1}^n(x_s+t)=\sum_{r=0}^n\sigma_r(x)t^{n-r},
\end{eqnarray}
which is valid for all $x=(x_1,...,x_n)\in\mathbb R^n$ and
$t\in\mathbb R$. Differentiating this identity with respect to
$x_j$, one obtains
\begin{eqnarray}\label{SymmetricFunction1}
\prod_{s\neq j}(x_s+t)=\sum_{r=0}^n\frac{\partial\sigma_r}{\partial x_j}(x)t^{n-r},\;\;\;j=1,...,n.
\end{eqnarray}
Differentiation of (\ref{SymmetricFunction1}) with respect to
$x_i$, for $i\neq j$, yields
\begin{eqnarray}\label{SymmetricFunction2}
\prod_{s\neq i,j}(x_s+t)=\sum_{r=0}^n\frac{\partial^2\sigma_r}{\partial x_i\partial x_j}(x)t^{n-r},\;\;\;i\neq j.
\end{eqnarray}
From the identities
\begin{eqnarray}\label{SymmetricFunction3}
\sigma_r(x)=x_i\sigma_{r-1}(\widehat{x_i})+\sigma_r(\widehat{x_i}),\;\;\;x\in\mathbb
R^n,\;\;1\leq i,r \leq n,
\end{eqnarray}
where, for instance,
$\sigma_{r-1}(\widehat{x_i})=\sigma_{r-1}(x_1,...,x_{i-1},x_{i+1},...,x_n)$,
one obtains, for all $r=2,...,n$,
\begin{eqnarray}\label{SymmetricFunction5}
\frac{\partial^2\sigma_r}{\partial x_i\partial x_j}(x)=\begin{cases}\sigma_{r-2}(\widehat{x_i},\widehat{x_j}),
\;\;i\neq j,\\0,\;\;\;\;\;\;\;\;\;\;\;\;\;\;\;\;\;\,i=j.\end{cases}
\end{eqnarray}

In \cite{Wa} Walter established a formula for the Laplacian of the
$r$-th mean curvature of a hypersurface in a space of constant
sectional curvature. For convenience of the reader, we state and
prove that formula below. Recall that the Laplacian $\Delta u$ of
a $C^2$-function $u$ defined on a Riemannian manifold
$(M,\langle,\rangle)$ is the trace of the $2$-tensor field
$\text{Hess}\,u$, called the Hessian of $u$, defined by
$\text{Hess}\,u(X,Y)=\langle\nabla_X\nabla u,Y\rangle$, for all
$X,Y\in\mathfrak X(M)$.

\begin{prop}\label{WalterProposition}
Let $M^n$ be an orientable hypersurface of an orientable
Riemannian manifold $N ^{n+1}_c$ of constant sectional curvature
$c$. Then, for every $r=1,...,n$ and every $p\in M^n$,
\begin{eqnarray}\label{WalterFormula}
{n \choose r}\Delta H_r&=&n\sum_{j}
\frac{\partial\sigma_r}{\partial
x_j}(\overrightarrow{\lambda})\textnormal{Hess}\,H(e_j,e_j)-\sum_{i<j}
\frac{\partial^2\sigma_r}{\partial x_i\partial x_j}(\overrightarrow{\lambda})(\lambda_i-\lambda_j)^2K_{ij}\nonumber\\
&&+\sum_{i,j,k}\frac{\partial^2\sigma_r}{\partial x_i\partial x_j}(\overrightarrow{\lambda})(h_{iik}h_{jjk}-h_{ijk}^2),
\end{eqnarray}
where $\lambda_1,...,\lambda_n$ are the principal curvatures of
$M^n$ at $p$,
$\overrightarrow{\lambda}=(\lambda_1,\ldots,\lambda_n)$,
$\{e_1,...,e_n\}$ is an orthonormal basis of $T_pM$ that
diagonalizes the shape operator $A$, and $K_{ij}$ is the sectional
curvature of $M^n$ in the plane spanned by $\{e_i,e_j\}$.
\end{prop}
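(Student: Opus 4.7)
The plan is to view $\binom{n}{r}H_r = \sigma_r(\overrightarrow\lambda)$ as a smooth polynomial $F$ in the components $h_{ij}$ of the second fundamental form (valid because any symmetric polynomial in the eigenvalues of a symmetric matrix is a polynomial in its entries), then apply the chain rule twice to $\Delta F$ and use the Codazzi equation (through Lemma \ref{Inversion}) together with the Gauss equation to convert the resulting linear term into a Hessian of $H$ plus a curvature correction. Fix $p \in M$, choose an orthonormal basis $\{e_1,\ldots,e_n\}$ of $T_p M$ diagonalizing $A$ at $p$, and extend to an orthonormal frame geodesic at $p$. Writing $F_i := (\partial\sigma_r/\partial x_i)(\overrightarrow\lambda)$ and $F_{ij} := (\partial^2\sigma_r/\partial x_i\partial x_j)(\overrightarrow\lambda)$, the first- and second-order perturbation formulas for symmetric spectral functions at a diagonal point yield at $p$
$$\binom{n}{r}\Delta H_r \;=\; \sum_{i} F_i \sum_{k} h_{iikk} \;+\; \sum_{i,j,k} F_{ij}\,h_{iik}h_{jjk} \;+\; \sum_{\substack{i\neq j\\ k}}\frac{F_i-F_j}{\lambda_i-\lambda_j}\,h_{ijk}^2,$$
where the last sum comes from the second-order eigenvalue shift of the $2\times 2$ block $\bigl(\begin{smallmatrix}\lambda_i & \epsilon\\\epsilon & \lambda_j\end{smallmatrix}\bigr)$, which moves $\lambda_i,\lambda_j$ by $\pm\epsilon^2/(\lambda_i-\lambda_j)$ and hence contributes $2(F_i-F_j)/(\lambda_i-\lambda_j)$ to $\partial^2 F/\partial h_{ij}^2$ at the diagonal.

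The key algebraic identity is $F_i - F_j = -(\lambda_i - \lambda_j) F_{ij}$ for $i \neq j$, which follows from (\ref{SymmetricFunction3}): decomposing $\sigma_{r-1}(\widehat{\lambda_i})$ in the $\lambda_j$-variable and $\sigma_{r-1}(\widehat{\lambda_j})$ in the $\lambda_i$-variable, and subtracting, yields $(\lambda_j - \lambda_i)\sigma_{r-2}(\widehat{\lambda_i},\widehat{\lambda_j}) = -(\lambda_i - \lambda_j)F_{ij}$ using (\ref{SymmetricFunction5}). Together with $F_{ii} = 0$, this converts the third sum above into $-\sum_{i,j,k}F_{ij}\,h_{ijk}^2$, which combines with the middle sum to give the third term $\sum_{i,j,k}F_{ij}(h_{iik}h_{jjk} - h_{ijk}^2)$ of Walter's formula.

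For the linear term, use the symmetry of $\nabla^2 h$ in its first three entries (a consequence of Codazzi) to rewrite $h_{iikk} = h_{kiik}$, then apply Lemma \ref{Inversion} to interchange the last two entries: in the chosen basis this collapses to $\mathcal R_{ikki}\lambda_i + \mathcal R_{ikik}\lambda_k = K_{ik}(\lambda_i - \lambda_k)$ via the Gauss equation. A second use of first-three symmetry gives $h_{kiki} = h_{kkii}$, so
$$h_{iikk} \;=\; h_{kkii} + K_{ik}(\lambda_i - \lambda_k).$$
Summing on $k$ and using $\sum_k h_{kkii} = n\,\textnormal{Hess}\,H(e_i,e_i)$ (because $\mathrm{tr}\,h = nH$ globally and $\nabla$ commutes with metric contraction), then multiplying by $F_i$ and symmetrizing in $i \leftrightarrow k$ gives
$$\sum_{i,k} F_i K_{ik}(\lambda_i - \lambda_k) \;=\; \tfrac12 \sum_{i \neq k} K_{ik}(F_i - F_k)(\lambda_i - \lambda_k) \;=\; -\sum_{i<k} F_{ik}(\lambda_i - \lambda_k)^2 K_{ik}$$
by the same algebraic identity, which is exactly the second term of Walter's formula. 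The main technical obstacle is justifying the spectral perturbation formula $\partial^2 F/\partial h_{ij}^2|_{\mathrm{diag}} = 2(F_i - F_j)/(\lambda_i - \lambda_j)$, which is delicate at points of eigenvalue coincidence; however, the combination $F_{ij}\,h_{ijk}^2$ that actually appears in the final formula extends smoothly across such points by the identity above, so no separate argument is required there.
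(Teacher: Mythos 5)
Your proof is correct, and it reaches Walter's formula by a genuinely different route for the core algebraic step. The paper never touches spectral perturbation theory: it computes $\Delta\,\mathrm{det}(A+tI)$ directly, using multilinearity of the determinant in its columns to get the pointwise Hessian of the map $h\mapsto\mathrm{det}(h+tI)$, and then reads off the coefficient of $t^{n-r}$ via the generating identities $\prod_{s\neq j}(\lambda_s+t)=\sum_r\frac{\partial\sigma_r}{\partial x_j}(\overrightarrow\lambda)\,t^{n-r}$ and $\prod_{s\neq i,j}(\lambda_s+t)=\sum_r\frac{\partial^2\sigma_r}{\partial x_i\partial x_j}(\overrightarrow\lambda)\,t^{n-r}$. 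That device treats all $r$ at once and, crucially, never produces the singular quotient $(F_i-F_j)/(\lambda_i-\lambda_j)$ --- the second derivative in an off-diagonal entry comes out directly as $\prod_{s\neq i,j}(\lambda_s+t)$, with no case distinction at coincident eigenvalues. You instead work with $\sigma_r(\lambda(h))$ itself and invoke the standard second-order eigenvalue perturbation formula, then use the identity $F_i-F_j=-(\lambda_i-\lambda_j)F_{ij}$ (which you derive correctly from (\ref{SymmetricFunction3}) and (\ref{SymmetricFunction5})) to remove the singularity. Your closing remark is the one place that deserves a little more care: the perturbation formula is only \emph{derived} at points where $\lambda_i\neq\lambda_j$, so to get the identity at a point of eigenvalue coincidence you should say explicitly that both sides of the resulting Hessian identity are polynomials in $(\lambda_1,\dots,\lambda_n)$ and the perturbation matrix, agree on the dense set of diagonal matrices with distinct entries, and hence agree everywhere; ``the combination extends smoothly'' is the right idea but is not by itself a proof that the extension still computes $\partial^2F/\partial h_{ij}^2$ at such points. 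Everything downstream --- the permutation $h_{iikk}=h_{kiik}$, the application of Lemma \ref{Inversion}, the Gauss equation, the trace identity $\sum_k h_{kkii}=n\,\mathrm{Hess}\,H(e_i,e_i)$, and the symmetrization producing $-\sum_{i<j}F_{ij}(\lambda_i-\lambda_j)^2K_{ij}$ --- coincides with the paper's argument. In short: your approach buys a conceptually transparent ``chain rule for spectral functions'' at the cost of a removable-singularity argument; the paper's generating-function approach buys a uniformly polynomial computation at the cost of carrying the auxiliary variable $t$.
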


\begin{proof}
Extend the orthonormal basis $\{e_1,...,e_n\}$ of $T_pM$ to
a local orthonormal frame field, still denoted by
$\{e_1,...,e_n\}$, through parallel transport of the $e_i$'s along
the geodesics emanating from $p$. From (\ref{SecondFundForm}), one
obtains
\begin{eqnarray}\label{det1}
(A+tI)e_j=\sum_l(h_{lj}+t\delta_{lj})e_l,\;\;\;1\leq j\leq n,\;\;t\in\mathbb R.
\end{eqnarray}
Denoting by $V_1,...,V_n$ the columns of the matrix
$(h_{lj}+t\delta_{lj})$, one has
\begin{eqnarray}\label{det4}
e_k(V_j)=\sum_lh_{ljk}E_l,\;\;\;j,k=1,...,n,
\end{eqnarray}
where $\{E_1,...,E_n\}$ is the canonical basis of $\mathbb R^n$.
Then, by (\ref{det4}) and multilinearity of the determinant,
\begin{eqnarray}\label{det5}
e_k\big(\text{det}\,(A+tI)\big)=
\sum_{j,l}h_{ljk}\text{det}\,(V_1,...,V_{j-1},E_l,V_{j+1},...,V_n).
\end{eqnarray}
Differentiating the above equality with respect to $e_k$ and using
(\ref{det4}), we obtain at $p$
\begin{eqnarray}\label{det7}
e_ke_k\big(\text{det}\,(A+tI)\big)&=&\sum_{i\neq
j}(h_{iik}h_{jjk}-h_{ijk}^2)\prod_{s\neq
i,j}(\lambda_s+t)\nonumber\\
&&+\sum_{j}h_{jjkk}\prod_{s\neq j}(\lambda_s+t).
\end{eqnarray}
By Lemma \ref{Inversion} and Codazzi equation, we have at $p$
\begin{eqnarray}\label{det8}
h_{jjkk}=h_{jkjk}&=&h_{jkkj}+\sum_m\mathcal R_{jkjm}h_{mk}+\sum_m\mathcal R_{jkkm}h_{jm}\nonumber\\
&=&h_{kkjj}+(\lambda_j-\lambda_k)\mathcal R_{jkkj}.
\end{eqnarray}
Covariant differentiation of the equality $nH=\sum_kh_{kk}$ gives
\begin{eqnarray}\label{det8a}
n\text{Hess}\,H(e_j,e_j)=\sum_{k}h_{kkjj},\;\;\;j=1,...,n.
\end{eqnarray}
Since the Laplacian of a function is the trace of its Hessian, and
$\nabla_{e_i}e_j(p)=0$, $1\leq i,j\leq n$, summing over $k$ in
(\ref{det7}), and using (\ref{det8}) and (\ref{det8a}), we arrive
at
\begin{eqnarray}\label{det10}
\Delta\,\text{det}\,(A+tI)&=&\sum_jnH_{jj}\prod_{s\neq j}(\lambda_s+t)
+\sum_{j\neq k}(\lambda_j-\lambda_k)\mathcal R_{jkkj}\prod_{s\neq j}(\lambda_s+t)\nonumber\\
&&+\sum_k\sum_{i\neq j}(h_{iik}h_{jjk}-h_{ijk}^2)\prod_{s\neq i,j}(\lambda_s+t),
\end{eqnarray}
where $H_{jj}=\text{Hess}\,H(e_j,e_j)$. Since
\begin{eqnarray}\label{det11}
\prod_{s\neq j}(\lambda_s+t)=[(\lambda_k-\lambda_j)+(\lambda_j+t)]\prod_{s\neq j,k}(\lambda_s+t),\nonumber
\end{eqnarray}
the second term on the right hand side of (\ref{det10}) can be written as
\begin{eqnarray}\label{det12}
\sum_{j\neq k}(\lambda_j-\lambda_k)\mathcal R_{jkkj}\prod_{s\neq j}(\lambda_s+t)
&=&-\sum_{j\neq k}(\lambda_j-\lambda_k)^2\mathcal R_{jkkj}\prod_{s\neq j,k}(\lambda_s+t)\nonumber\\
&&-\sum_{j\neq k}(\lambda_k-\lambda_j)\mathcal R_{jkkj}\prod_{s\neq k}(\lambda_s+t).\nonumber
\end{eqnarray}
Since $\mathcal R_{jkkj}=\mathcal R_{kjjk}$, the second term on
the right hand side of the above equality is minus the term on the
left. Hence
\begin{eqnarray}\label{det13}
\sum_{j\neq k}(\lambda_j-\lambda_k)\mathcal R_{jkkj}\prod_{s\neq j}(\lambda_s+t)=
-\sum_{j<k}(\lambda_j-\lambda_k)^2\mathcal R_{jkkj}\prod_{s\neq j,k}(\lambda_s+t).
\end{eqnarray}
It now follows from (\ref{SymmetricFunction1}), (\ref{SymmetricFunction2}), (\ref{det10}) and (\ref{det13}) that
\begin{eqnarray}\label{det14}
\Delta\,\text{det}\,(A+tI)&=&\sum_{r=0}^n\Big\{\sum_jnH_{jj}\frac{\partial\sigma_r}{\partial x_j}(\overrightarrow\lambda)
-\sum_{i<j}(\lambda_i-\lambda_j)^2\mathcal R_{ijji}\frac{\partial^2\sigma_r}{\partial x_i\partial x_j}(\overrightarrow\lambda)\nonumber\\
&&+\sum_{i,j,k}(h_{iik}h_{jjk}-h_{ijk}^2)\frac{\partial^2\sigma_r}{\partial x_i\partial x_j}(\overrightarrow\lambda)\Big\}t^{n-r}.
\end{eqnarray}
On the other hand, taking $x_s=\lambda_s$ in
(\ref{SymmetricFunction}) and using (\ref{DefinitionHr2}), one
obtains
\begin{eqnarray}\label{det5C}
\text{det}(A+tI)=\sum_{r=0}^n\binom{n}{r}H_rt^{n-r},
\end{eqnarray}
and so
\begin{eqnarray}\label{det16}
\Delta\,\text{det}\,(A+tI)=\sum_{r=0}^n\binom{n}{r}\Delta H_rt^{n-r}.
\end{eqnarray}
Comparing (\ref{det14}) and (\ref{det16}), one obtains
(\ref{WalterFormula}).
\end{proof}

\section{Complete and bounded hypersurfaces}

In the proof of Theorem \ref{MainResult} we will use, besides
Propositions \ref{Concave} and \ref{WalterProposition}, the
following result.

\begin{prop}\label{EllipticPoint}
Let $M^n$ be a complete Riemannian manifold with sectional
curvature $K$ bounded from below and $f:M^n\to\mathbb{Q}^{n+k}_c$
an isometric immersion of $M^n$ into the $(n+k)$-dimensional
complete simply-connected space $\mathbb{Q}^{n+k}_c$ of constant
sectional curvature $c$. In the case $c\leq 0$, assume that $f(M^n)$ is bounded, and in the case $c>0$, that $f(M^n)$ lies inside a
geodesic ball of radius $\rho<\pi/2\sqrt{c}$.  Then, there exist $p\in M$ and a unit vector $\xi_0\in(f_{\ast}T_p M)^{\bot}$ such that, for
any unit vector $v\in T_p M$,
\begin{eqnarray}\label{ast_propo_elipt}
\langle A_{\xi_0} v,v\rangle>\left\{
\begin{array}{ll}
0,&c\geq 0\\
\sqrt{-c},&c< 0.
\end{array}
\right.
\end{eqnarray}
\end{prop}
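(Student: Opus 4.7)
The plan is to apply the Omori--Yau maximum principle to a suitable radial ``potential'' on the ambient space composed with $f$, and to read off the desired point and normal direction from the resulting sequence.

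First I would fix a base point $x_0\in\mathbb{Q}^{n+k}_c$ (the center of the geodesic ball of radius $\rho$ containing $f(M)$ when $c>0$; any convenient point when $c\leq 0$) and define a radial function $\varphi:\mathbb{Q}^{n+k}_c\to\mathbb{R}$ whose ambient Hessian is a positive multiple of the metric, namely
$$\varphi(x)=\begin{cases}\tfrac{1}{2}r(x)^2,&c=0,\\[2pt]\frac{1}{-c}\bigl(\cosh(\sqrt{-c}\,r(x))-1\bigr),&c<0,\\[2pt]\frac{1}{c}\bigl(1-\cos(\sqrt{c}\,r(x))\bigr),&c>0,\end{cases}$$
where $r(x)=d(x_0,x)$. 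The standard Hessian formula for a radial function in a space form yields $\textnormal{Hess}_{\mathbb{Q}}\,\varphi=\psi(r)\,\langle\cdot,\cdot\rangle$, where $\psi(r)$ equals $1$, $\cosh(\sqrt{-c}\,r)$, or $\cos(\sqrt{c}\,r)$ respectively, and is strictly positive on the region containing $f(M)$ (the spherical-cap hypothesis $\rho<\pi/(2\sqrt{c})$ enters exactly here when $c>0$).

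Next, set $g:=\varphi\circ f$, which is bounded since $f(M)$ is. Because $M$ is complete with sectional curvature bounded below, Omori--Yau (Hessian form) supplies a sequence $\{p_j\}\subset M$ with $g(p_j)\to\sup_Mg$, $|\nabla g(p_j)|<1/j$, and $\textnormal{Hess}\,g(p_j)(v,v)<1/j$ for every unit $v\in T_{p_j}M$. Applying the Gauss formula for the Hessian of a composition, together with $\textnormal{Hess}_{\mathbb{Q}}\varphi=\psi(r)\,\langle\cdot,\cdot\rangle$, rewrites this as
$$\psi(r_j)+\bigl\langle(\textnormal{grad}_{\mathbb{Q}}\varphi)^{\perp},\sigma(v,v)\bigr\rangle<\tfrac{1}{j},\qquad r_j:=r(f(p_j)).$$
Since $|\nabla g(p_j)|=|(\textnormal{grad}_{\mathbb{Q}}\varphi)^{\top}|\to 0$ and $|\textnormal{grad}_{\mathbb{Q}}\varphi|=\varphi'(r)$, one has $|(\textnormal{grad}_{\mathbb{Q}}\varphi)^{\perp}|\to\varphi'(r^*)$, where $r_j\to r^*$ is determined by $\varphi(r^*)=\sup_Mg$ (so $r^*>0$, because $f$ is a non-constant immersion). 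Setting $\xi_j:=-(\textnormal{grad}_{\mathbb{Q}}\varphi)^{\perp}/|(\textnormal{grad}_{\mathbb{Q}}\varphi)^{\perp}|$ at $f(p_j)$ and substituting yields
$$\bigl\langle A_{\xi_j}v,v\bigr\rangle>\frac{\psi(r_j)-1/j}{|(\textnormal{grad}_{\mathbb{Q}}\varphi)^{\perp}|}\longrightarrow\frac{\psi(r^*)}{\varphi'(r^*)},$$
which evaluates to $1/r^*>0$ for $c=0$, to $\sqrt{-c}\coth(\sqrt{-c}\,r^*)>\sqrt{-c}$ for $c<0$, and to $\sqrt{c}\cot(\sqrt{c}\,r^*)>0$ for $c>0$ (the last because $r^*\leq\rho<\pi/(2\sqrt{c})$). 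Hence, for all $j$ large enough, the pair $(p_j,\xi_j)$ provides the point and normal required by the proposition.

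The delicate point I anticipate is the unified treatment of the three sign regimes: producing a single radial potential with conformal Hessian in each model space, and verifying that the resulting lower bound $\psi(r^*)/\varphi'(r^*)$ strictly exceeds the prescribed threshold in every case. The hyperbolic case is the subtlest — one must recognize that $\coth>1$ is precisely what forces the inequality $>\sqrt{-c}$, matching the statement — and one must also confirm that $r^*>0$ so that the denominator $|(\textnormal{grad}_{\mathbb{Q}}\varphi)^{\perp}|$ does not degenerate and the normal $\xi_j$ is well-defined.
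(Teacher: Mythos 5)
Your proof is correct, and although it shares the paper's overall strategy --- apply the Omori--Yau maximum principle to a radial function composed with $f$, and extract the desired normal direction from the normal component of the ambient gradient --- your choice of test function changes the key computation in a genuinely different way. The paper works with $g=r\circ f$ itself: since $\textnormal{Hess}\,r$ vanishes in the radial direction and equals $\mu_c(r)$ times the metric on the tangent spaces of geodesic spheres, the authors must split $f_{\ast}v=v_1+v_2$ into radial and spherical parts, identify $\textnormal{Hess}\,r(v_2,v_2)$ with the second fundamental form of the geodesic sphere, and control $\|v_2\|^2>(1-\varepsilon^2)\|v\|^2$ using the gradient estimate. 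Your modified potential $\varphi$ has conformal ambient Hessian $\psi(r)\langle\cdot,\cdot\rangle$, so the Hessian term is exactly $\psi(r)\|v\|^2$ for unit $v$, with no decomposition and no error term; as a bonus, $\varphi$ is smooth at the center $x_0$, whereas $r$ is not (a technicality the paper's argument glosses over). The price is that $\|\textnormal{grad}\,\varphi\|=\varphi'(r)$ is no longer identically $1$, so you must rule out degeneration of the normal component of the gradient; you do this correctly by noting $r^{\ast}>0$ (else $f(M)$ would be a point). Your limiting bound $\psi(r^{\ast})/\varphi'(r^{\ast})=\mu_c(r^{\ast})$ is precisely the quantity the paper obtains (evaluated at $r^{\ast}$ rather than estimated below by $\mu_c(\rho)$), and your case-by-case verification that it strictly exceeds $0$ for $c\geq 0$ and $\sqrt{-c}$ for $c<0$ is right, so choosing $j$ large and setting $p=p_j$, $\xi_0=\xi_j$ completes the argument.
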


We believe that the above proposition is known, but since we were
unable to find a reference for it in the literature, we will prove
it below. The main ingredient in this proof is the following well
known maximum principle due to Omori and Yau \cite{CY1,Om,Y1} (see
\cite[Theorem 3.4]{FX} for a conceptual refinement of this
principle):

\vskip10pt

\noindent{\bf Omori-Yau Maximum Principle.} {\em Let $M^n$ be a
complete Riemannian manifold with sectional curvature (resp. Ricci
curvature) bounded from below, and let $f:M\to\mathbb{R}$ be a
$C^2$-function bounded from above. Then, for every
$\varepsilon>0$, there exists $x_{\varepsilon}\in M$ such that}
\begin{eqnarray}\label{SpecialSequence1}
f(x_{\varepsilon})>\sup f-\varepsilon,\,||\nabla
f(x_{\varepsilon})||<\varepsilon,\,\text{Hess}
f(x_{\varepsilon})(v,v)<\varepsilon\|v\|^2\,\big(\emph{resp.}\,
\Delta f(x_{\varepsilon})<\varepsilon\big).\nonumber
\end{eqnarray}

The following lemma, which will also be used in the proof of Proposition \ref{EllipticPoint}, expresses the gradient and Hessian of the
restriction of a function to a submanifold in terms of the space
gradient and Hessian (see \cite[p. 46]{Da} for a proof). In its
statement, we will use the symbol $\nabla$ for the gradient of any
function involved.
\begin{lem}\label{HessianComposition}
Let $f:M^n\to N^{n+k}$ be an isometric immersion of a Riemannian
manifold $M^n$ into a Riemannian manifold $N^{n+k}$, and let
$g:N\to\mathbb R$ be a function of class $C^2$. Then, for all
$p\in M$ and $v,w\in T_{p}M$, one has
\begin{eqnarray}
f_{\ast}\big(\nabla (g\circ f)(p)\big)=\Big[\nabla
g(f(p))\Big]^{\top},
\end{eqnarray}
\vskip-15pt
\begin{eqnarray}\label{HessianCompositionA}
\emph{Hess}\,(g\circ
f)_p(v,w)=\emph{Hess}\,g_{f(p)}(f_{\ast}v,f_{\ast}w)+\Big\langle\nabla
g(f(p)),\sigma_p(v,w)\Big\rangle,
\end{eqnarray}
where $\sigma$ is the second fundamental form of the immersion, $f_{\ast}$
is the differential of $f$ and ``$\top$'' means orthogonal
projection onto $f_{\ast}(T_p M)$.
\end{lem}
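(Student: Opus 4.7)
The plan is to prove the two formulas by working directly from the definitions, using the chain rule for the first identity and the Gauss formula for the second.

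For the gradient formula, I would start from the defining property of the gradient on $M$ and apply the chain rule. For any $v\in T_pM$,
\[
\langle\nabla(g\circ f)(p),v\rangle_M = d(g\circ f)_p(v) = dg_{f(p)}(f_\ast v) = \langle\nabla g(f(p)),f_\ast v\rangle_N.
\]
Since $f$ is isometric, the left side equals $\langle f_\ast\nabla(g\circ f)(p),f_\ast v\rangle_N$. Letting $v$ range over $T_pM$, the vector $f_\ast\nabla(g\circ f)(p)\in f_\ast(T_pM)$ has the same inner product with every element of $f_\ast(T_pM)$ as $\nabla g(f(p))$ does, so it must be the orthogonal projection $[\nabla g(f(p))]^\top$.

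For the Hessian formula, I would choose smooth extensions $V,W$ of $v,w$ on a neighborhood of $p$ in $M$; because $f$ is an immersion, locally it is an embedding, so I can further choose smooth vector fields $\tilde V,\tilde W$ on a neighborhood of $f(p)$ in $N$ that restrict along $f(M)$ to $f_\ast V$ and $f_\ast W$. Using the definition
\[
\mathrm{Hess}(g\circ f)_p(v,w) = V(W(g\circ f))(p) - (\nabla^M_VW)(g\circ f)(p),
\]
I rewrite $W(g\circ f)=(\tilde Wg)\circ f$ to obtain $V(W(g\circ f))(p)=\tilde V(\tilde Wg)(f(p))$, and by the first part of the lemma I write $(\nabla^M_VW)(g\circ f)(p)=\langle\nabla g(f(p)),f_\ast(\nabla^M_VW)\rangle$. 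Comparing with
\[
\mathrm{Hess}\,g_{f(p)}(f_\ast v,f_\ast w) = \tilde V(\tilde Wg)(f(p)) - \langle\nabla g(f(p)),\tilde\nabla_{\tilde V}\tilde W(f(p))\rangle
\]
and invoking the Gauss formula $\tilde\nabla_{\tilde V}\tilde W(f(p)) = f_\ast(\nabla^M_VW) + \sigma_p(v,w)$, the two terms involving $\tilde V(\tilde Wg)(f(p))$ cancel and the remaining terms collapse to the claimed identity.

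There is no substantive obstacle here: both identities are chain-rule computations in disguise. The only mild care needed is in selecting the ambient extensions $\tilde V,\tilde W$ of $f_\ast V,f_\ast W$ — legitimate precisely because an isometric immersion is locally an embedding — and in noting that the final expression is tensorial in $v,w$, so it is independent of the extensions chosen.
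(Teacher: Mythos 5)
Your proof is correct: the gradient identity follows from the chain rule plus the fact that $f_{\ast}\nabla(g\circ f)(p)$ reproduces the inner products of $\nabla g(f(p))$ against all of $f_{\ast}(T_pM)$, and the Hessian identity follows by comparing the two Hessian expressions through local extensions and the Gauss formula, with tensoriality disposing of the choice of extensions. The paper itself gives no proof, deferring to \cite[p.\ 46]{Da}, where the argument is the same chain-rule/Gauss-formula computation you present, so your write-up is a faithful rendering of the standard approach.
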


\noindent{\it Proof of Proposition \ref{EllipticPoint}.} By hypothesis, $f(M)$ is contained in some closed ball $\overline
B_{\rho}(q_o)$ of center $q_o$ and radius $\rho$, with
$\rho<\pi/2\sqrt{c}$ if $c>0$. Let $r(\cdot)=d(\cdot,q_0)$ be the distance
function from the point $q_0$ in $\mathbb{Q}^{n+k}_c$ and let
$g=r\circ f$. Since $g$ is bounded from above (for
$f(M)\subset\overline{B}_{\rho}(q_0)$) and the
sectional curvatures of $M$ are bounded from below, the Omori-Yau
maximum principle assures us that, for every $\varepsilon>0$, there
exist $x_{\varepsilon}\in M$ such that
\begin{eqnarray}\label{eli101}
g(x_{\varepsilon})>\sup g-\varepsilon,\;\;\|\nabla
g(x_{\varepsilon})\|<\varepsilon,\;\;
\text{Hess}g_{x_{\varepsilon}}(v,v)<\varepsilon \|v\|^2,\;\forall
v\in T_{x_{\varepsilon}} M.\nonumber
\end{eqnarray}
From the last two inequalities and Lemma \ref{HessianComposition},
we obtain
\begin{eqnarray}\label{eli102}
\varepsilon>\|\nabla g(x_{\varepsilon})\|=\|\nabla
r(f(x_{\varepsilon}))^{\top}\|
\end{eqnarray}
and, for every $v\in T_{x_{\varepsilon}} M$,
\begin{eqnarray}\label{eli103}
\varepsilon
\|v\|^2>\text{Hess}g_{x_{\varepsilon}}(v,v)=\text{Hess}\;r_{f(x_{\varepsilon})}
(f_{\ast}v,f_{\ast}v)+\Big\langle\sigma_{x_{\varepsilon}}(v,v),\nabla
r(f(x_{\varepsilon}))\Big\rangle,
\end{eqnarray}
where the superscript ``$\top$'' indicates orthogonal projection
on $f_{\ast}(T_{x_{\varepsilon}} M)$.

For every $v\in T_{x_{\varepsilon}} M$, write
\begin{eqnarray}\label{eli104}
f_{\ast}v=v_1+v_2,
\end{eqnarray}
where $v_1$ and $v_2$ are the components of $f_{\ast}v$ that are parallel and orthogonal,
respectively, to $\nabla
r(f(x_{\varepsilon}))$. Recalling that $\overline{\nabla}_{\nabla r}\nabla
r=0$, where $\overline{\nabla}$ is the Riemannian connection of
$\mathbb{Q}^{n+k}_c$, one
has
\begin{eqnarray}\label{eli105}
\text{Hess}\,r_{f(x_{\varepsilon})}(f_{\ast} v,f_{\ast}v)
&=&\text{Hess}\,r_{f(x_{\varepsilon})}(v_1+v_2,v_1+v_2)\nonumber\\&=&\text{Hess}\;r_{f(x_{\varepsilon})}(v_2,v_2).
\end{eqnarray}
Note that $v_2$ is tangent to the geodesic sphere $S$ of $\mathbb
Q^{n+k}_c$ centered at $q_0$ that contains $f(x_{\epsilon})$.
Applying (\ref{HessianCompositionA}) for the inclusion
$\iota:S\to\mathbb Q^{n+k}_c$ and $g=r$, one obtains
\begin{eqnarray}\label{eli105A}
\text{Hess}\,r_{f(x_{\varepsilon})}(v_2,v_2)=\langle Bv_2,v_2\rangle,
\end{eqnarray}
where $B$ is the shape operator of $S$ with respect to $-\nabla
r$. Since the principal curvatures of a geodesic sphere of radius
$t$ in $\mathbb Q^{n+k}_c$ are constant and given by
\begin{eqnarray}\label{eli106}
\mu_c(t)=\left\{
\begin{array}{ll}
\sqrt{c}\;\text{cot}(\sqrt{c}\;t),&c>0,\;0<t<{\pi}/{\sqrt{c}},\\
1/t, &c=0,\;t>0,\\
\sqrt{-c}\;\text{coth}(\sqrt{-c}\;t),&c<0,\;t>0,
\end{array}
\right.
\end{eqnarray}
it follows from (\ref{eli105}) and (\ref{eli105A}) that
\begin{eqnarray}\label{eli106A}
\text{Hess}\,r_{f(x_{\varepsilon})}(f_{\ast} v,f_{\ast}v)=\mu_c(r(f(x_{\varepsilon})))||v_2||^2.
\end{eqnarray}
As $\|\nabla r\|\equiv 1$, by (\ref{eli104}) one has $v_1=\langle
f_{\ast}v,\nabla r(f(x_{\varepsilon}))\rangle\nabla
r(f(x_{\varepsilon}))$. Then, by (\ref{eli102}),
\begin{eqnarray}\label{eli107}
\|v_1\|=|\langle f_{\ast}v,\nabla
r(f(x_{\varepsilon}))^{\top}\rangle|
\leq\big\|f_{\ast}v\big\|\big\|\nabla
r(f(x_{\varepsilon}))^{\top}\big\|<\varepsilon\|v\|.
\end{eqnarray}
From (\ref{eli104}) and (\ref{eli107}), we obtain
\begin{eqnarray}\label{eli108}
\|v_2\|^2=\|f_{\ast}v\|^2-\|v_1\|^2=\|v\|^2-\|v_1\|^2>(1-\varepsilon^2)\|v\|^2.
\end{eqnarray}
Hence, by (\ref{eli103}), (\ref{eli106A}) and (\ref{eli108}),
\begin{eqnarray*}
\varepsilon \|v\|^2>
\mu_c(r(f(x_{\varepsilon})))(1-\varepsilon^2)\|v\|^2+\big\langle\sigma_{x_{\varepsilon}}(v,v),\nabla
r(f(x_{\varepsilon}))\big\rangle.
\end{eqnarray*}
Since $\mu_c$ is decreasing and $r(f(x_{\varepsilon}))\leq\rho$,
it follows that
\begin{eqnarray*}
\varepsilon
\|v\|^2&>&\mu_c(\rho)(1-\varepsilon^2)\|v\|^2+\big\langle\sigma_{x_{\varepsilon}}(v,v),\nabla r(f(x_{\varepsilon}))\big\rangle\nonumber\\
&=&\mu_c(\rho)(1-\varepsilon^2)\|v\|^2+\big\langle\sigma_{x_{\varepsilon}}(v,v),\nabla
r(f(x_{\varepsilon}))^{\perp}\big\rangle,
\end{eqnarray*}
where $\nabla r(f(x_{\varepsilon}))^{\perp}$ is the component of
$\nabla r(f(x_{\varepsilon}))$ that is orthogonal to $f_{\ast}(T_{x_{\varepsilon}}
M)$. Setting $\xi_{\varepsilon}=-\nabla
r(f(x_{\varepsilon}))^{\perp}/||\nabla
r(f(x_{\varepsilon}))^{\perp}||$, it follows from (\ref{Sigma})
and the above inequality that
\begin{eqnarray}\label{eli109}
\langle
A_{\xi_{\varepsilon}}v,v\rangle=\langle\sigma_{x_{\varepsilon}}(v,v),\xi_{\varepsilon}\rangle
>\frac{\mu_c(\rho)(1-\varepsilon^2)-\varepsilon}{\|\nabla
r(f(x_{\varepsilon}))^{\perp}\|}\,,
\end{eqnarray}
for all $v\in T_{x_{\varepsilon}}M,\;||v||=1$. Since, by
(\ref{eli102}), the term on the right hand side of (\ref{eli109})
tends to $\mu_c(\rho)$ when $\varepsilon\to 0$, and, by
(\ref{eli106}), $\mu_c(\rho)>0$ for $c\geq 0$ and
$\mu_c(\rho)>\sqrt{-c}$ for $c<0$, (\ref{ast_propo_elipt}) is
fulfilled choosing $p=x_{\varepsilon}$ and
$\xi_0=\xi_{\varepsilon}$, where $\varepsilon$ is any positive
number sufficiently small.\qed

\section{Proof of Theorem \ref{MainResult}.}

Since $H$ is constant and $R$ is bounded from below, from
(\ref{Relation}) one obtains that $|A|^2$ is bounded, and so that
the sectional curvatures of $M^n$ are bounded from below. Then, by  Proposition \ref{EllipticPoint}, 
there exist a point $p\in M$ and a unit vector
$\xi_0\in(f_{\ast} T_{p}M)^{\bot}$ such that
\begin{eqnarray}\label{Proof1}
\langle A_{\xi_0}v,v\rangle>\alpha_c ||v||^2,\;\;\;v\in T_{p} M,
\end{eqnarray}
where
\begin{eqnarray}\label{Proof1a}
\alpha_c=\begin{cases}
0, &c\geq0,\\
\sqrt{-c},&c<0.
\end{cases}
\end{eqnarray}
Choosing the unit normal vector field $\xi$ such that
$\xi(p)=\xi_0$, by (\ref{Proof1}) the principal curvatures of $M$
at $p$ satisfy
\begin{eqnarray}\label{Proof2}
\lambda_i(p)>\alpha_c\geq 0,\;\;\;i=1,\cdots,n.
\end{eqnarray}

By Proposition \ref{WalterProposition} one has, as $H$ and $H_r$
are constant,
\begin{eqnarray}\label{Proof5}
\sum_{i<j}\frac{\partial^2\sigma_r}{\partial x_i\partial
x_j}(\overrightarrow{\lambda})(\lambda_i-\lambda_j)^2K_{ij}
=\sum_{i,j,k}\frac{\partial^2\sigma_r}{\partial x_i\partial
x_j}(\overrightarrow{\lambda})(h_{iik}h_{jjk}-h_{ijk}^2),
\end{eqnarray}
where $\overrightarrow{\lambda}=(\lambda_1,...,\lambda_n)$. From
(\ref{Proof2}) one obtains that $H_r>0$ and that
$\overrightarrow{\lambda}(p)$ belongs to the G\aa rding's cone
$\Gamma_r$ (see Section 2). Then, since $M$ is connected,
$\overrightarrow{\lambda}(q)\in\Gamma_r,\;\forall q\in M$.

By Proposition \ref{Concave}, $W_r=\sigma_r^{1/r}$ is a concave
function on $\Gamma_r$. Thus,
\begin{eqnarray}\label{Proof8}
\sum_{i,j}y_iy_j\frac{\partial^2 W_r}{\partial x_i\partial
x_j}(x)\leq 0,
\end{eqnarray}
for all $x\in\Gamma_r$ and $(y_1,\ldots,y_n)\in\mathbb R^n$. A
simple computation shows that
\begin{eqnarray}\label{Proof10}
\frac{\partial^2 W_r}{\partial x_i\partial x_j}
=\frac{1}{r}\sigma_r^{\frac{1}{r}-2}\left(\frac{1-r}{r}\frac{\partial\sigma_r}{\partial
x_i}\frac{\partial\sigma_r}{\partial x_j}
+\sigma_r\frac{\partial^2\sigma_r}{\partial x_i\partial
x_j}\right).
\end{eqnarray}
Using (\ref{Proof10}) in (\ref{Proof8}), we conclude that
\begin{eqnarray}\label{Proof11}
\sigma_r(x)\sum_{i,j}y_{i}y_{j}\frac{\partial^2\sigma_r}{\partial
x_i\partial x_j}(x)
&\leq&\frac{r-1}{r}\sum_{i,j}y_{i}y_{j}\frac{\partial\sigma_r}{\partial
x_i}(x)\frac{\partial\sigma_r}{\partial
x_j}(x)\nonumber\\&=&\frac{r-1}{r}\left(\sum_jy_j\frac{\partial\sigma_r}{\partial
x_j}(x)\right)^2,
\end{eqnarray}
for all $x\in\Gamma_r$ and $(y_1,\cdots,y_n)\in\mathbb R^n$.
Taking $x=\overrightarrow{\lambda}$ and $y_i=h_{iik}$,
$i=1,\ldots,n$, in (\ref{Proof11}), one obtains
\begin{eqnarray}\label{Proof12}
\binom{n}{r}H_r\sum_{i,j}h_{iik}h_{jjk}\frac{\partial^2\sigma_r}{\partial
x_i\partial x_j}(\overrightarrow{\lambda})
\leq\frac{r-1}{r}\Big(\sum_jh_{jjk}\frac{\partial\sigma_r}{\partial
x_j}(\overrightarrow{\lambda})\Big)^2,\;\;\;\forall k.
\end{eqnarray}

We claim that in a basis that diagonalizes $A$,
\begin{eqnarray}\label{det5E}
\sum_{j}h_{jjk}\frac{\partial\sigma_r}{\partial
x_j}(\overrightarrow{\lambda})=\binom{n}{r}e_k(H_r).
\end{eqnarray}
The claim can be proved using the formula \cite[p. 225]{Ro}
\begin{eqnarray}\label{GradientH_r}
\binom{n}{r}e_k(H_r)=\text{trace}\big(P_{r-1}\nabla_{e_k}A\big),\nonumber
\end{eqnarray}
where $P_{r-1}$ is the $(r-1)$-th Newton tensor associated with
the shape operator $A$ of $M$. Alternatively, (\ref{det5E}) can be
obtained from the computations made in the proof of Proposition
\ref{WalterProposition}. In fact, by (\ref{SymmetricFunction1})
and (\ref{det5}) we have
\begin{eqnarray}\label{det5A}
e_k\big(\text{det}\,(A+tI)\big)&=&\sum_{j}h_{jjk}\prod_{s\neq
j}(\lambda_s+t).\nonumber\\&=&\sum_{r=0}^n\left(\sum_{j}h_{jjk}\frac{\partial\sigma_r}{\partial
x_j}(\overrightarrow{\lambda})\right)t^{n-r}.
\end{eqnarray}
On the other hand, by (\ref{det5C}) one has
\begin{eqnarray}\label{det5D}
e_k\big(\text{det}\,(A+tI)\big)=\sum_{r=0}^n\binom{n}{r}e_k(H_r)t^{n-r}.
\end{eqnarray}
Comparing (\ref{det5A}) and (\ref{det5D}), one obtains (\ref{det5E}).

Since $H_r$ is a
positive constant, from (\ref{Proof12}) and (\ref{det5E}) one obtains
\begin{eqnarray}\label{Proof16}
\sum_{i,j}h_{iik}h_{jjk}\frac{\partial^2\sigma_r}{\partial
x_i\partial x_j}(\overrightarrow{\lambda})
\leq0,\;\;\;k=1,...,n.\nonumber
\end{eqnarray}
Using this information in (\ref{Proof5}), we conclude that the
inequality
\begin{eqnarray}\label{Proof5A}
\sum_{i<j}\frac{\partial^2\sigma_r}{\partial x_i\partial
x_j}(\overrightarrow{\lambda})(\lambda_i-\lambda_j)^2K_{ij}
\leq-\sum_{i,j,k}h_{ijk}^2\frac{\partial^2\sigma_r}{\partial
x_i\partial x_j}(\overrightarrow{\lambda})\nonumber
\end{eqnarray}
holds at every point of $M$. Since, by (\ref{SymmetricFunction5})
and (\ref{Proof2}),
\begin{eqnarray}\label{Proof19}
\frac{\partial^2\sigma_r}{\partial x_i\partial
x_j}(\overrightarrow{\lambda}(p))=\begin{cases}\sigma_{r-2}(\widehat{\lambda_i}(p),\widehat{\lambda_j}(p))>0,\;\;\;&i\neq
j,\\0,\;\;\;&i=j,\end{cases}
\end{eqnarray}
it follows that
\begin{eqnarray}\label{Proof17}
\sum_{i<j}\frac{\partial^2\sigma_r}{\partial x_i\partial
x_j}(\overrightarrow{\lambda}(p))(\lambda_i(p)-\lambda_j(p))^2K_{ij}(p)\leq0.
\end{eqnarray}
Since, by (\ref{Proof2}) and the Gauss equation,
\begin{eqnarray}\label{Proof18}
K_{ij}(p)=c+\lambda_i(p)\lambda_j(p)>c+\alpha_c^2\geq0,\;\;\;i\neq
j,\nonumber
\end{eqnarray}
it follows from (\ref{Proof19}) and (\ref{Proof17}) that
\begin{eqnarray}\label{Proof20}
\lambda_1(p)=\cdots=\lambda_n(p)=H.
\end{eqnarray}

The above argument in fact shows that every point $q\in M$ for
which $\lambda_i(q)>\alpha_c,\;\forall i,$ is umbilical. Since
$H>\alpha_c$ by (\ref{Proof2}) and (\ref{Proof20}), one then has
that the set $B$ of all the umbilical points of $M$ is open. Since
$B$ is also nonempty (for $p\in M$) and closed (by the continuity
of the principal curvature functions), one concludes that $B=M$
from the connectedness of $M$. Hence,
\begin{eqnarray}\label{Proof21}
\lambda_1=\cdots=\lambda_n=H>\alpha_c,
\end{eqnarray}
at any point of $M$. It now follows from (\ref{Proof1a}),
(\ref{Proof21}) and the Gauss equation that the sectional
curvature of $M$ satisfies $K=c+H^2>0$. In particular, $M$ is
compact. It now follows from the classification of the umbilical
hypersurfaces in a simply connected space form (see, for instance,
\cite[p. 25]{BCO}) that $f(M)$ is a hypersphere of $\mathbb
Q^{n+1}_c$.\qed

$$
\begin{array}{lccl}
\text{Francisco Fontenele}            && & \text{Roberto Alonso N\'u\~nez}\\
\text{Departamento de Geometria}      && & \text{Rua M\'ario Santos Braga s/n}\\
\text{Universidade Federal Fluminense}&& & \text{24020-140\;\;Niter\'oi, RJ, Brazil}\\
\text{Niter\'oi, RJ, Brazil}          && & \text{\detokenize{roberto78nunez@gmail.com}}\\
\texttt{\detokenize{fontenele@mat.uff.br}}           && & \texttt{}\\
\end{array}
$$

\end{document}